\theoremstyle{plain}
\newtheorem{thm}{Theorem}[section]
\newtheorem{corollary}[thm]{Corollary}
\newtheorem{lemma}[thm]{Lemma}
\theoremstyle{definition}
\newtheorem{definition}{Definition}%[section]
\theoremstyle{remark}
\newtheorem{rem}{Remark}[section]
\newtheorem{example}{Example}%[section]
\numberwithin{equation}{section}
\begin{document}
\begin{center}{\bf{Arithmetic summable sequence space over non-Newtonian field}}

\vspace{.5cm}
Taja Yaying $^{1}$, Bipan Hazarika $^2$\\
$^{1}$Department of Mathematics, Dera Natung Govt. College, Itanagar-791111, Arunachal Pradesh, India\\
$^{2}$Department of Mathematics, Guwahati University, Guwahati, Assam, India\\
Email:tajayaying20@gmail.com;bhrgu@yahoo.co.in
\end{center}
\title{}
\author{}
\thanks{}
\date{}
\begin{abstract} Recently Ruckle \cite{RuckleArithmeticalSummability} introduced the theory of arithmetical summability suggested by the sum $ \sum_{k|m}f(k) $ as $ k $ ranges over the divisors of $m$ including $ 1 $ and $ m .$ Following Ruckle \cite{RuckleArithmeticalSummability} we construct the sequence space $ AS(G) $ and $ AC(G) $ of arithmetic summable and arithmetic convergent sequences in the sense of geometric calculus and derive interesting results in the geometric field.  
\\
\noindent{\footnotesize{\textit{Keywords:}}} Arithmetic convergence, Arithmetic summability, non-Newtonian calculus, Geometric Calculus. \\
{\footnotesize {AMS subject classification \textrm{(2010)}:}} Primary: 26A06; Secondary: 46A45, 46A11.
\end{abstract}
\maketitle
\section{Introduction}
Grossman and Katz \cite{GrossmanandKatzNonNewtonianCalculus} first introduced non-Newtonian calculus as an alternative to the classical calculus. Non-Newtonian calculus has many branches such as bigeometric calculus, geometric calculus, harmonic calculus, etc. Any type of calculi other than the classical calculus are together termed as non-Newtonian calculus. Every property in a non-Newtonian calculus has an analogue in the classical calculus that provides an alternative structure to the problems that can be investigated through classical calculus. In some cases, for example growth related problems, the use of non-Newtonian calculus is advocated instead of the traditional one. \\
Bashirov et. Al. \cite{BashirovKurpinarOzyapici} introduced the notions of multiplicative derivatives and multiplicative integrals and give their applications. Many authors \cite{CakmakBasar, CakmakBasar2, TekinBasar, KadakKotheToeplitzduals, KadakandEfe} have constructed different types of sequence spaces over non-Newtonian field. Talo and Ba\c{s}ar \cite{TaloBasar} have studied the certain sets of sequences of fuzzy numbers and introduced the classical sets $\ell_{\infty}(F)$, $c(F)$, $c_0(F)$ and $\ell_p(F)$ consisting of the bounded, convergent, null and absolutely $p$-summable sequences of fuzzy numbers. They defined the $\alpha$, $\beta$ and $\gamma$ duals of a set of sequences of fuzzy numbers, and gave the duals of the classical sets of sequences of fuzzy numbers together with the characterization of the classes of infi…nite matrices of fuzzy numbers transforming one of the classical set into another one. \c{C}akmak and Ba\c{s}ar \cite{CakmakBasar} constructed the field $\mathbb{R}(N)$ of non-Newtonian real numbers and show that $\mathbb{R}(N)$ is a complete field. They define the sets $w(N),$ $\ell_{\infty}(N)$, $c(N)$, $c_0(N)$, and $\ell_p(N)$ of all, bounded, convergent, null and $p$-summable sequence spaces respectively in the sense of non-Newtonian calculus and show that each of the sets
forms a vector space on the field $\mathbb{R}(N)$ and a complete metric space. T\"{u}rkmen and Ba\c{s}ar \cite{TurkmenandBasar} define the sequence spaces $w(G)$, $\ell_{\infty}(G)$, $c(G)$, $c_0(G)$, and $\ell_p(G)$ of all, bounded, convergent, null and $p$-summable sequence spaces respectively over geometric complex field $\mathbb{C}(G)$ and show that each of the sequence spaces forms a vector space on the field $\mathbb{C}(G)$ and a complete metric space. Tekin and Ba\c{s}ar \cite{TekinBasar} extended these results to construct the sequence spaces $w^*$, $\ell_{\infty}^*$, $c^*$, $c_0^*$ and $\ell_p^*$ over the non-Newtonian complex field $\mathbb{C}^*.$ More recently, Kadak \cite{kadakCesaroSummableSeqnSpace} introduced the Ces\`{a}ro *-summable sequence spaces $w_0^p(\alpha,\beta)$, $w^p(\alpha, \beta)$ and $w_{\infty}^p(\alpha, \beta)$ that are strongly *-summable to zero, *-summable and *-bounded respectively over non-Newtonian complex field. One may refer to \cite{GrossmanBigeometricCalculus, GrossmanandKatzNonNewtonianCalculus, BashirovKurpinarOzyapici, BashirovRiza, CakmakBasar, KadakandEfe, UzerMultiCalulus, kadakCesaroSummableSeqnSpace, TaloBasar} for greater inside into non-Newtonian calculus.\\
We use the notation $\sum_{k|n} f(k)$ to denote the finite sum of all the numbers $f(k)$ as $k$ ranges over the integers that divide $n$ including $1$ and $n.$ For two integers $m$ and $n$ the greatest common divisor of $m$ and $n$ denoted as $\left\langle m,n\right\rangle$ is the largest number that divides both $m$ and $n.$\\
Ruckle \cite{RuckleArithmeticalSummability} introduced the notions of arithmetical summability and arithmetic convergence as\\
\begin{enumerate}
\item[(1)] A sequence $f$ defined on $\mathbb{N}$ is called arithmetically summable if for each $\varepsilon>0$ there is an integer $n$ such that for every integer $m$ we have $|\sum_{k|m}f(k)-\sum_{k|\left\langle m,n \right\rangle}f(k)|<\varepsilon.$\\
\item[(2)] A sequence $g$ is called arithmetically convergent if for each $\varepsilon>0$ there is an integer $n$ such that for every integer $m$ we have $|g(m)-g(\left\langle m,n\right\rangle)| < \varepsilon.$
\end{enumerate} 
Yaying and Hazarika \cite{YayingHazarikaArContinuity} further extended the theory of arithmetical summability and introduced the concepts of arithmetic continuity and arithmetic compactness in $\mathbb{R}.$ One may refer to \cite{RuckleArithmeticalSummability, YayingHazarikaArContinuity, YayingHazarikaArModulusFn, YayingHazarikaArSummMultiplier, cakalliVariationArContinuity} for greater inside to arithmetical summability.\\
The main purpose of this article is to investigate the theory of arithmetical summability using the sum $\sideset{_G}{}\sum_{k|m}f(k)$ i.e. the geometric sum of the geometric numbers $f(k)$ as $k$ ranges over the divisors of $m,$ accompanied by the theory of arithmetic convergence over the geometric real field and give some interesting results.
\section{Preliminaries, Background, and Notations}
An arithmetic is any system that satisfies the whole of the ordered field axioms whose domain is a subset $\mathbb{R}(\alpha)$ of $\mathbb{R},$ the set of real numbers. There are infinitely many types of arithmetic, all of which are isomorphic, that is, structurally equivalent.\\
A generator, denoted by, $\alpha,$ is a one-to-one function whose domain is $\mathbb{R}$ and range is a subset of $\mathbb{R}.$ Each generator generates exactly
one arithmetic and, conversely, each arithmetic is generated by exactly one generator.\\
By $\alpha$-arithmetic we mean the arithmetic whose domain is $\mathbb{R}(\alpha).$ Consider any generator $\alpha$ and $x, y\in \mathbb{R}(\alpha),$\\
\begin{eqnarray*}
\alpha-addition: x\dot{+}y=\alpha\left\lbrace \alpha^{-1}(x)+\alpha^{-1}(y)\right\rbrace \\
\alpha-subtraction: x\dot{-}y=\alpha\left\lbrace \alpha^{-1}(x)-\alpha^{-1}(y)\right\rbrace \\
\alpha-multiplication: x\dot{\times}y=\alpha\left\lbrace \alpha^{-1}(x)\times\alpha^{-1}(y)\right\rbrace \\
\alpha-division: x\dot{/}y=\alpha\left\lbrace \alpha^{-1}(x)\div \alpha^{-1}(y)\right\rbrace \\
\end{eqnarray*}
If we take $\alpha$ as an Identity function, then $\alpha$-arithmetic reduces to the classical arithmetic. In our case, we shall consider $\alpha$ to be an exponential function i.e. $\alpha:\mathbb{R}\rightarrow\mathbb{R^+}$ given by $\alpha(x)=e^x,$ which gives $\alpha^{-1}(x)=\ln x.$ Then $\alpha$-arithmetic reduces to what we call geometric arithmetic as given below:
\begin{eqnarray*}
\alpha-addition : x\oplus y=\alpha\left\lbrace \alpha^{-1}(x)+\alpha^{-1}(y)\right\rbrace =e^{\left\lbrace \ln x+\ln y\right\rbrace }=x. y~:Geometric ~addition\\
\alpha-subtraction : x\ominus y=\alpha\left\lbrace \alpha^{-1}(x)-\alpha^{-1}(y)\right\rbrace =e^{\left\lbrace \ln x-\ln y\right\rbrace }=x\div y~: Geometric~subtraction\\
\alpha-multiplication: x\odot y=\alpha\left\lbrace \alpha^{-1}(x)\times\alpha^{-1}(y)\right\rbrace=e^{\left\lbrace \ln x\times \ln y\right\rbrace}=x^{\ln y}~: Geometric~ multiplication \\
\alpha-division: x\oslash y=\alpha\left\lbrace \alpha^{-1}(x)\div \alpha^{-1}(y)\right\rbrace=e^{\left\lbrace \ln x\div \ln y\right\rbrace}=x^{\frac{1}{\ln y}}~:Geometric ~division  \\
\end{eqnarray*}
The set $\mathbb{Z}(G)$ of geometric integers and the set $\mathbb{R}(G)$ of geometric real numbers are given by
\begin{eqnarray*}
\mathbb{Z}(G)=\left\lbrace e^x: x\in \mathbb{Z}\right\rbrace \\
\mathbb{R}(G)=\left\lbrace e^x: x\in \mathbb{R}\right\rbrace= \mathbb{R}^+\setminus \left\lbrace 0\right\rbrace  \\
\end{eqnarray*}

\begin{rem}
$\left( \mathbb{R}(G),\oplus,\odot\right)$  is a complete field with geometric unity $e$ and geometric zero $e^0=1.$
\end{rem}
\begin{definition}\cite{TurkmenandBasar}
A sequence $(x_n)$ in a metric space $(X, d_G)$ is said to be convergent in the sense of geometric calculus if there is an $x\in X$ such that $\lim_{n\rightarrow\infty}d_G(x_n, x) = 1$ then $x$ is called the geometric limit of $(x_n)$ and we write $\leftidx{^G}\lim_{n\rightarrow \infty}x_n = x$ or $x_n\xrightarrow[]{G} x;$ as $n\rightarrow \infty.$
\end{definition}
\begin{definition}\cite{TurkmenandBasar}
A sequence $(x_n)$ in a geometric metric space $(X, d_G)$ is said to be geometric Cauchy if for every $\varepsilon >1$ in $\mathbb{R}^+(G)$, there is an $N = N(\varepsilon)$ such that $\left|x_m\ominus x_n \right|<\varepsilon $ for all $m,n>N.$ The space $X$ is said to be geometric complete if every geometric Cauchy sequence converges in $X.$
\end{definition}
Now we are ready to define arithmetic summable and arithmetic convergence sequence space in the sense of geometric calculus.
\section{Arithmetic summable and arithmetic convergent sequence space over geometric real field}
\begin{definition}
A geometric sequence $f$ is said to be arithmetic summable in the sense of geometric calculus if there exist an integer $n$ such that for all $m\in \mathbb{N}$ we have 
\[
\leftidx{_G}{\left|\sideset{_G}{}\sum_{k|m}f(k)\ominus \leftidx{_G}{\sum}_{k|{\left\langle m,n\right\rangle}} f(k)\right|}=1
\]
\end{definition}
\begin{definition}
A geometric sequence $g$ is said to be arithmetic convergent in the sense of geometric calculus if there exist an integer $n$ such that for all $m\in \mathbb{N}$ we have
\begin{equation*}
\leftidx{_G}{\left|g(m)\ominus g(\left\langle m,n\right\rangle)\right|}=1
\end{equation*}
\end{definition}
We shall denote the set of all arithmetic summable sequences and arithmetic convergent sequences in the sense of geometric calculus by $AS(G)$ and $AC(G)$ respectively. \\
Consider the sequence $\left\lbrace e,e,e^0,e^0,e^0,\ldots\right\rbrace $ in $\mathbb{R}(G).$ Clearly the sequence is arithmetic summable in $\mathbb{R}(G).$ In fact, the sequences $\leftidx{_G}{\varphi}$ i.e. the space of the all sequences $f(k)$ in $\mathbb{R}(G)$ such that there is $\leftidx{_G}{N}$ for which $f(k)=e^0$ for all $k>\leftidx{_G}{N}$ is arithmetic summable i.e. $\leftidx{_G}{\varphi}\subset AS(G).$\\
\begin{thm}
The sequence spaces $AC(G)$ and $AS(G)$ are linear in the sense of geometric calculus.
\end{thm}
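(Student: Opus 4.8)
The plan is to work from the reformulation forced by the equality form of both definitions. Since the geometric absolute value $\leftidx{_G}{|z|}$ equals the geometric zero $e^{0}=1$ precisely when $z=1$, the condition $g\in AC(G)$ means exactly that there is an integer $n$ with $g(m)=g(\langle m,n\rangle)$ for all $m\in\mathbb{N}$ — call such an $n$ a \emph{witness} for $g$ — and, writing $F_{f}(m):=\sideset{_G}{}\sum_{k\mid m}f(k)$ for the geometric partial-sum sequence of $f$, the condition $f\in AS(G)$ holds if and only if $F_{f}\in AC(G)$. Since $AC(G)$ and $AS(G)$ sit inside the geometric sequence space $w(G)$, which is a vector space over $(\mathbb{R}(G),\oplus,\odot)$, and since the geometric zero sequence $(1,1,1,\dots)$ lies in both (indeed $\leftidx{_G}{\varphi}\subset AS(G)$ was already noted), it suffices to prove closure under the map $(g,h)\mapsto(\lambda\odot g)\oplus(\mu\odot h)$ for $\lambda,\mu\in\mathbb{R}(G)$.

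The one combinatorial lemma I would isolate is: \emph{if $n_{1}$ is a witness for $g$ and $n_{1}\mid n$, then $n$ is again a witness for $g$.} This uses only the $\gcd$ identity $\langle\langle m,n\rangle,n_{1}\rangle=\langle m,n_{1}\rangle$, valid because $n_{1}\mid n$; applying the witness property of $g$ at the integer $\langle m,n\rangle$ then gives $g(\langle m,n\rangle)=g(\langle\langle m,n\rangle,n_{1}\rangle)=g(\langle m,n_{1}\rangle)=g(m)$. Consequently, if $g$ has witness $n_{1}$ and $h$ has witness $n_{2}$, then $n:=n_{1}n_{2}$ is a common witness for both.

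Given this, linearity of $AC(G)$ is immediate: for $g,h\in AC(G)$ and $\lambda,\mu\in\mathbb{R}(G)$, let $n$ be a common witness and set $f:=(\lambda\odot g)\oplus(\mu\odot h)$, so that $f(m)=(\lambda\odot g(m))\oplus(\mu\odot h(m))$. From $g(m)=g(\langle m,n\rangle)$ and $h(m)=h(\langle m,n\rangle)$ one obtains $f(m)=f(\langle m,n\rangle)$ by applying $\lambda\odot(\cdot)$, $\mu\odot(\cdot)$ and $\oplus$, hence $\leftidx{_G}{|f(m)\ominus f(\langle m,n\rangle)|}=1$ and $f\in AC(G)$. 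For $AS(G)$ the extra ingredient is that the geometric partial-sum operator $f\mapsto F_{f}$ is geometric-linear:
\[
\sideset{_G}{}\sum_{k\mid m}\bigl[(\lambda\odot f_{1}(k))\oplus(\mu\odot f_{2}(k))\bigr]
=\Bigl(\lambda\odot\sideset{_G}{}\sum_{k\mid m}f_{1}(k)\Bigr)\oplus\Bigl(\mu\odot\sideset{_G}{}\sum_{k\mid m}f_{2}(k)\Bigr).
\]
This follows from the field axioms of $(\mathbb{R}(G),\oplus,\odot)$, or most transparently by applying the generator $\alpha^{-1}=\ln$, which is a field isomorphism onto $(\mathbb{R},+,\times)$ carrying the finite geometric sum into an ordinary finite sum of logarithms, over which the classical distributive law applies. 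Now if $f_{1},f_{2}\in AS(G)$ then $F_{f_{1}},F_{f_{2}}\in AC(G)$, and the identity above shows $F_{f}=(\lambda\odot F_{f_{1}})\oplus(\mu\odot F_{f_{2}})$ for $f:=(\lambda\odot f_{1})\oplus(\mu\odot f_{2})$; by the linearity of $AC(G)$ just established this lies in $AC(G)$, so $f\in AS(G)$.

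I do not expect a genuine obstacle: the only points asking for care are the common-witness lemma (the $\gcd$ manipulation) and the discipline of composing the non-Newtonian operations in the legitimate order, since $\oplus$ and $\odot$ are multiplication and $x\mapsto x^{\ln y}$ rather than ordinary $+$ and $\times$. Everything else is the image, under the generator $\alpha(x)=e^{x}$, of the corresponding classical argument.
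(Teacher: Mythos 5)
Your proof is correct, but it follows a genuinely different route from the paper's. The paper argues directly with the geometric modulus: it writes the combination $(\alpha\odot f(i)\oplus\beta\odot g(i))\ominus(\alpha\odot f(\langle i,n\rangle)\oplus\beta\odot g(\langle i,n\rangle))$, regroups it, applies the geometric triangle inequality and the bound $\leftidx{_G}{\left|\alpha\right|}\odot\leftidx{_G}{\left|f(i)\ominus f(\langle i,n\rangle)\right|}\leq\leftidx{_G}{\left|f(i)\ominus f(\langle i,n\rangle)\right|}$, and concludes $e^{0}\oplus e^{0}=e^{0}$; the $AS(G)$ case is dismissed with ``similarly.'' You instead exploit the fact that the definitions here are exact equalities, so membership in $AC(G)$ is literally $g(m)=g(\langle m,n\rangle)$, and the whole computation collapses to substitution once a common witness exists. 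Your version buys two real improvements: first, your common-witness lemma (via $\langle\langle m,n\rangle,n_{1}\rangle=\langle m,n_{1}\rangle$ for $n_{1}\mid n$) repairs a genuine gap in the paper, which silently uses a single integer $n$ for both sequences even though each element of $AC(G)$ a priori comes with its own witness; second, you avoid the paper's step $\leftidx{_G}{\left|\alpha\right|}\odot x\leq x$, which is not true for general $x$ and is only saved here because $x$ is the geometric zero (so that $\alpha\odot e^{0}=e^{0}$ outright). You also treat $AS(G)$ honestly by checking that the geometric divisor-sum operator is linear, which is the one fact the paper's ``similarly'' is hiding. The only thing the paper's triangle-inequality formulation would buy in exchange is robustness: it would survive essentially unchanged if the definitions were relaxed to Ruckle's original $\varepsilon$-form, whereas your equality reformulation is tied to the exact-equality definitions adopted in this paper.
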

\begin{proof}
Let $f(i)$ and $g(i)$ be any two sequences in $\mathbb{R}(G).$ Thus, for an integer $n,$ we have,\\
\begin{equation*}
\leftidx{_G}{\left|f(i)\ominus f(\left\langle i,n\right\rangle)\right|}=1~\text{and}~\leftidx{_G}{\left|g(i)\ominus g(\left\langle i,n\right\rangle)\right|}=1,\quad \text{for all $i.$}
\end{equation*}
Thus for integer $n$ and for scalars $\alpha, \beta\in \mathbb{R}(G),$
\begin{align*}
\leftidx{_G}{\left|\left( \alpha\odot f(i)\oplus\beta\odot g(i)\right) \ominus \left( \alpha \odot f(\left\langle i,n\right\rangle)\oplus \beta \odot g(\left\langle i,n\right\rangle)\right) \right|}
&= \leftidx{_G}{\left|\alpha\odot \left(f(i)\ominus f(\left\langle i,n\right\rangle)\right)\oplus \beta \odot \left(g(i)\ominus g(\left\langle i,n\right\rangle)\right) \right|}\\
&\leq \leftidx{_G}{\left|\alpha\right| }\odot \leftidx{_G}{\left|f(i)\ominus f(\left\langle i,n\right\rangle)\right|}\oplus \leftidx{_G}{\left|\beta \right| } \odot\\
& \leftidx{_G}{\left|g(i)\ominus g(\left\langle i,n\right\rangle)\right|}\\
&\leq \leftidx{_G}{\left|f(i)\ominus f(\left\langle i,n\right\rangle)\right|}\oplus \leftidx{_G}{\left|g(i)\ominus g(\left\langle i,n\right\rangle)\right|}\\
&= e^0\oplus e^0=e^0
\end{align*}
This shows that the sequence space $AC(G)$ is a linear space.\\
Similarly we can show that $AS(G)$ is also linear.
\end{proof}
The following theorem helps in converting any sequence in $\mathbb{R}(G)$ into a sequence in $AC(G).$
\begin{thm}
If $f$ is any sequence in $\mathbb{R}(G)$ and $n$ any integer then the sequence $\mathcal{Q}_nf=g$ defined by $g(i)=f(\left\langle n,i \right\rangle)$ is in $AC(G).$   
\end{thm}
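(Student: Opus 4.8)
The plan is to exhibit the integer witnessing the arithmetic convergence of $g=\mathcal{Q}_n f$ explicitly: I claim the integer $n$ itself does the job. By the definition of $AC(G)$ I must check that $\leftidx{_G}{\left|g(m)\ominus g(\left\langle m,n\right\rangle)\right|}=1$ for every $m\in\mathbb{N}$, and unwinding the definition of $g$ this becomes a statement comparing $f(\left\langle n,m\right\rangle)$ with $f(\left\langle n,\left\langle m,n\right\rangle\right\rangle)$.

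The one arithmetic fact I would isolate first is the elementary identity $\left\langle n,\left\langle m,n\right\rangle\right\rangle=\left\langle m,n\right\rangle$, valid for all integers $m,n$, which holds simply because $\left\langle m,n\right\rangle$ divides $n$. Using it together with commutativity of the greatest common divisor gives $g(\left\langle m,n\right\rangle)=f(\left\langle n,\left\langle m,n\right\rangle\right\rangle)=f(\left\langle m,n\right\rangle)=f(\left\langle n,m\right\rangle)=g(m)$ for every $m$. Thus $g$ is not merely close to, but literally equal to, its restriction along $\left\langle\,\cdot\,,n\right\rangle$.

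It then remains to transport this honest equality through the non-Newtonian operations. Since geometric subtraction is ordinary division, $g(m)\ominus g(\left\langle m,n\right\rangle)=g(m)\div g(\left\langle m,n\right\rangle)=1=e^0$, the geometric zero, and the geometric absolute value of $e^0$ is again $e^0=1$; hence the integer $n$ satisfies the defining condition of $AC(G)$ for $g$, so $g\in AC(G)$. I do not anticipate a genuine obstacle here: the whole content is the gcd identity above plus the bookkeeping of the operators $\ominus$ and $\leftidx{_G}{|\cdot|}$, and the only point deserving a line of care is that the same integer $n$ works simultaneously for all $m$, which is immediate once the identity is in hand.
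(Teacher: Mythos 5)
Your proposal is correct and follows essentially the same route as the paper's own proof: both hinge on the identity $\left\langle n,\left\langle m,n\right\rangle\right\rangle=\left\langle m,n\right\rangle$ and then observe that the geometric difference of equal terms is $e^0=1$. Your write-up merely spells out the unwinding of $\ominus$ and $\leftidx{_G}{|\cdot|}$ a bit more explicitly.
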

\begin{proof}
We have, for an integer $n$ and for all $m\in\mathbb{N}$
\begin{eqnarray*}
\leftidx{_G}{\left| g(m)\ominus g(\left\langle m,n \right\rangle)\right| }
&=&\leftidx{_G}{\left| f(\left\langle m,n\right\rangle)\ominus f(\left\langle n,\left\langle m,n\right\rangle \right\rangle)\right| }\\
&=&\leftidx{_G}{\left| f(\left\langle m,n\right\rangle)\ominus f(\left\langle m,n\right\rangle)\right| }\\
&=& e^0
\end{eqnarray*}
This shows that the sequence $g\in AC(G).$
\end{proof}  

\begin{example}\label{example1}
We consider a sequence $f=\{e,e^2,e^3,e^4,\ldots\}$ in $\mathbb{R}(G)$ and taking $n=1,$\\
\begin{eqnarray*}
\mathcal{Q}_1f(1)=g(1)=f(\left\langle 1,1 \right\rangle)=f(1)=e;\\
\mathcal{Q}_1f(2)=g(2)=f(\left\langle 2,1 \right\rangle)=f(1)=e;\\
\mathcal{Q}_1f(3)=g(3)=f(\left\langle 3,1 \right\rangle)=f(1)=e;\\
\text{and so on.}
\end{eqnarray*}
Thus, $g=\{e,e,e,\ldots\}$ which is clearly arithmetic convergent.
\end{example}
The next result helps in transforming the arithmetic convergent sequences in $\mathbb{R}(G)$ to ordinary convergence in $\mathbb{R}(G).$
\begin{thm}
If $f\in AC(G)$  and $n_k$ is a sequence of integers such that $n_k|n_{k+1}$ for all $k$ then the sequence $g(k)=f(n_k)$ converges in $\mathbb{R}(G).$
\end{thm}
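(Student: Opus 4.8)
The plan is to use the finiteness of the divisor set of the integer that witnesses membership of $f$ in $AC(G)$. First I would fix, by the definition of $AC(G)$, an integer $n$ with $\leftidx{_G}{\left|f(m)\ominus f(\langle m,n\rangle)\right|}=1$ for every $m\in\mathbb{N}$; since geometric subtraction is ordinary division and the geometric zero is $e^0=1$, this is exactly the statement $f(m)=f(\langle m,n\rangle)$ for all $m$. Taking $m=n_k$ then gives $g(k)=f(n_k)=f(\langle n_k,n\rangle)$ for every $k$, so it suffices to understand the behaviour of the integers $\langle n_k,n\rangle$.

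The key step is to observe that the sequence $\left(\langle n_k,n\rangle\right)_{k}$ is non-decreasing with respect to divisibility. Indeed, if $a=\langle n_k,n\rangle$ then $a\mid n_k$ and the hypothesis $n_k\mid n_{k+1}$ give $a\mid n_{k+1}$; combined with $a\mid n$ this yields $a\mid\langle n_{k+1},n\rangle$. Hence $\langle n_1,n\rangle\mid\langle n_2,n\rangle\mid\cdots$, and every term of this chain is a divisor of the fixed integer $n$. Since $n$ has only finitely many divisors and $a\mid b$ forces $a\le b$ for positive integers, the chain is bounded above by $n$ and therefore eventually constant: there is an index $K$ such that $\langle n_k,n\rangle=\langle n_K,n\rangle=:d$ for all $k\ge K$.

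Putting the two observations together, $g(k)=f(\langle n_k,n\rangle)=f(d)$ for all $k\ge K$, so $g$ is eventually constant. It follows at once that $g$ converges in $\mathbb{R}(G)$ with $\leftidx{^G}\lim_{k\to\infty}g(k)=f(d)$, since $d_G\!\left(g(k),f(d)\right)=\leftidx{_G}{\left|g(k)\ominus f(d)\right|}=e^0=1$ for every $k\ge K$, hence $\lim_{k\to\infty}d_G(g(k),f(d))=1$. I do not expect a genuine obstacle here; the only point needing care is the stabilisation of the divisor chain $\left(\langle n_k,n\rangle\right)_k$, and this is precisely where the assumption $n_k\mid n_{k+1}$ — rather than merely $n_k\le n_{k+1}$ — is used, together with the finiteness of the divisor set of the single modulus $n$ supplied by $f\in AC(G)$.
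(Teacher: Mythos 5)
Your proof is correct, and it rests on the same pivot as the paper's: the stabilization of the chain $\langle n,n_k\rangle$. The difference is in what each of you does with that pivot. The paper simply asserts ``choose $K$ such that $\langle n,n_k\rangle=\langle n,n_{k+1}\rangle$ for all $k\geq K$'' and then runs a Cauchy-type estimate, inserting $f(\langle n,n_i\rangle)$ and applying the geometric triangle inequality to get $\leftidx{_G}{\left|g(i)\ominus g(j)\right|}=e^0$ for $i,j\geq K$, finally invoking completeness of $\mathbb{R}(G)$. (Its displayed computation also writes $\langle m,n_i\rangle$ where $m$ should be the integer $n$ witnessing $f\in AC(G)$.) You instead supply the justification the paper omits --- that $\langle n_k,n\rangle\mid\langle n_{k+1},n\rangle$ and all terms divide the fixed $n$, so the chain is eventually constant --- and then observe that the defining condition of $AC(G)$ literally says $f(m)=f(\langle m,n\rangle)$, so $g$ is eventually the constant $f(d)$. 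Your route is slightly more elementary and more informative: it exhibits the limit explicitly and needs no appeal to completeness, whereas the paper's Cauchy argument would generalize more readily to settings where the relevant quantity is merely small rather than exactly $e^0$. Both are valid; yours closes a genuine gap (the unproved stabilization) in the paper's write-up.
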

\begin{proof}
Consider an integer $n$ be such that $\leftidx{_G}{\left| f(i)\ominus f(\left\langle i,n\right\rangle)\right|} =e^0$ for all $i.$\\
We choose $K$ such that $\left\langle n,n_k\right\rangle=\left\langle n,n_{k+1}\right\rangle$ for all $k\geq K.$\\
Thus for all $i,j\geq K,$
\begin{eqnarray*}
\leftidx{_G}{\left| g(i)\ominus g(j)\right| }&=&\leftidx{_G}{\left|f(n_i)\ominus f(n_j)\right| }\\
&=&\leftidx{_G}{\left|f(n_i)\ominus f(\left\langle m,n_i\right\rangle)\oplus f(\left\langle m,n_i\right\rangle)\ominus f(n_j)\right| }\\
&\leq& \leftidx{_G}{\left|f(n_i)\ominus f(\left\langle m,n_i\right\rangle)\right|} \oplus \leftidx{_G}{\left| f(\left\langle m,n_i\right\rangle)\ominus f(n_j)\right|}\\
&=& \leftidx{_G}{\left|f(n_i)\ominus f(\left\langle m,n_i\right\rangle)\right|} \oplus \leftidx{_G}{\left|f(n_j)\ominus f(\left\langle m,n_j\right\rangle)\right|}\\
&=& e^0\oplus e^0=e^0
\end{eqnarray*}
Thus the sequence $g$ satisfies the Cauchy's criterion in $\mathbb{R}(G)$ and so converges.
\end{proof}
\begin{example}
We take a sequence $f=\{e,e^2,e,e^2,\ldots\}.$ It is easy to verify that the sequence $f\in AC(G)$(refer example \ref{example1} and take $n=2$). Consider $n_k=\{2,4,8,16\ldots\}.$\\
Now,
\begin{align*}
g(1)=f(n_1)=f(2)=e^2;\\
g(2)=f(n_2)=f(4)=e^2;\\
g(3)=f(n_3)=f(8)=e^2;\\
\text{and so on.}
\end{align*}
Thus $g=\{e^2,e^2,e^2,\ldots\}$ which clearly converges in $\mathbb{R}(G).$
\end{example}
Before we move to the next results of the paper, we define analogue of M\"{o}bius function in the sense of geometric calculus and give some corresponding well known results of classical calculus.
\begin{definition}[Analogous of M\"{o}bius function in $\mathbb{R}(G)$]
M\"{o}bius function in the sense of geometric calculus is a mapping $\leftidx{_G}{\mu}:\mathbb{N}\rightarrow \{e,e^{(-1)^k},e^0\}$ defined by
\begin{eqnarray*}
\leftidx{_G}{\mu}(n)=
\begin{cases}
e,\qquad &\text{when}~n=1;\\
e^{(-1)^k},\qquad &\text{when}~n=p_1p_2\ldots p_k;\\
e^0, \qquad &\text{when $n$ has a square prime factor;}\\
\end{cases}
\end{eqnarray*}
where $p_i$'s are distinct prime numbers.
\end{definition}
\begin{lemma}\label{resultmu}
\begin{eqnarray*}
\sideset{_G}{}\sum_{k|n}\leftidx{_G}{\mu}(k)=
\begin{cases}
e, \qquad & n=1;\\
e^0, \qquad & n>1;\\
\end{cases}
\end{eqnarray*}
\end{lemma}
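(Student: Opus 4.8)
The plan is to reduce the identity to the classical Möbius summation formula by transporting everything through the generator $\alpha(x)=e^{x}$. First I would observe that the geometric Möbius function is simply the $\alpha$-image of the ordinary Möbius function $\mu$: inspecting the three cases of the definition shows $\leftidx{_G}{\mu}(k)=e^{\mu(k)}$ for every $k\in\mathbb{N}$ (using $\mu(1)=1=(-1)^{0}$, $\mu(p_1\cdots p_k)=(-1)^{k}$, and $\mu(k)=0$ when $k$ has a squared prime factor).

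Next I would use the fact, recorded in the preliminaries, that geometric addition $\oplus$ is ordinary multiplication on $\mathbb{R}(G)=\mathbb{R}^{+}\setminus\{0\}$; since $(\mathbb{R}(G),\oplus,\odot)$ is a field, $\oplus$ is associative and commutative, so a finite geometric sum of terms $e^{a_1},\dots,e^{a_r}$ equals $\prod_{j}e^{a_j}=e^{a_1+\cdots+a_r}$. Applying this to the finitely many divisors of $n$ gives
\[
\sideset{_G}{}\sum_{k|n}\leftidx{_G}{\mu}(k)=\prod_{k|n}e^{\mu(k)}=e^{\left(\sum_{k|n}\mu(k)\right)}.
\]
Then I would invoke the classical identity $\sum_{k|n}\mu(k)=1$ for $n=1$ and $\sum_{k|n}\mu(k)=0$ for $n>1$; substituting yields $e^{1}=e$ when $n=1$ and $e^{0}$ when $n>1$, which is exactly the assertion. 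If a self-contained proof of the classical identity is wanted, for $n=p_1^{a_1}\cdots p_t^{a_t}>1$ only squarefree divisors contribute, and grouping them by their number $j$ of prime factors gives $\sum_{j=0}^{t}\binom{t}{j}(-1)^{j}=(1-1)^{t}=0$.

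I do not expect a genuine obstacle here. The only step needing a word of care is the passage from the geometric sum to an exponential of a classical sum: one must check that $\oplus$ really acts as multiplication and that reindexing over the divisor set is legitimate, but both are immediate from the explicit form of the generator and the field axioms for $\mathbb{R}(G)$. The remaining content is precisely the elementary number-theoretic fact about $\sum_{k|n}\mu(k)$.
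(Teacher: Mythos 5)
Your proof is correct and is essentially the paper's argument: both reduce the geometric sum to an ordinary product of exponentials and then to the binomial identity $\sum_{j=0}^{t}\binom{t}{j}(-1)^{j}=(1-1)^{t}=0$. If anything, your version is slightly tidier, since by factoring through the classical formula $\sum_{k|n}\mu(k)=0$ you handle arbitrary $n>1$ directly, whereas the paper assumes ``without loss of generality'' that $n$ is squarefree and leaves implicit that the non-squarefree divisors contribute only the additive identity $e^{0}$.
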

\begin{proof}
The result is obvious when $n=1.$\\
Let $n>1.$ Without loss of generality, we consider $n=p_1p_2\ldots p_r.$\\
\begin{eqnarray*}
\sideset{_G}{}\sum_{k|n}\leftidx{_G}{\mu}(k)
&=& \leftidx{_G}{\mu}(1)\oplus \sideset{_G}{}\sum_{i=1}^{r}\leftidx{_G}{\mu}(p_i)\oplus \sideset{_G}{}\sum_{i<j}^{r}\leftidx{_G}{\mu}(p_ip_j)\oplus\ldots \oplus\leftidx{_G}{\mu}(p_1p_2\ldots p_r)\oplus e^0\oplus\ldots\oplus e^0\\
&=& e\oplus e^{r(-1)}\oplus e^{^rC_2(-1)^2}\oplus \ldots \oplus e^{(-1)^r}\\
&=& e\cdot e^{r(-1)}\cdot e^{^rC_2(-1)^2}\ldots e^{(-1)^r}\\
&=& e^{1+^rC_1(-1)+^rC_2(-1)^2+\ldots +(-1)^r}\\
&=& e^{(1-1)^r}\\
&=& e^0\\
\end{eqnarray*}
Hence, the result.
\end{proof}
\begin{definition}[Dirichlet's product in $\mathbb{R}(G)$]
Let $f,g:\mathbb{N}\rightarrow \mathbb{R}(G).$ Then the Dirichlet's product of $f$ and $g$ is given by
\begin{equation*}
(f\ast g)(n)= \sideset{_G}{}\sum_{k|n}f\left( \frac{n}{k}\right) \odot g(k)
\end{equation*}
\end{definition}
Here, we note that the product $(\ast)$ is commutative and associative.\\
Next we construct an identity function with respect to Dirichlet's product in the sense of geometric calculus:
\begin{equation*}
\dot{e}(n)=
\begin{cases}
e, &\quad n=1;\\
e^0, &\quad n>1;
\end{cases}
\end{equation*}
It is easy to show that, $f\ast \dot{e}=f=\dot{e}\ast f.$ $\dot{e}$ will be called Dirichlet's identity in the sense of geometric calculus.\\
Consider the function $\dot{I}(n)=e$ for all $n$. It is interesting to see that 
\begin{equation*}
(\dot{I}\ast f)(n)=\sideset{_G}{}\sum_{k|n}f(k)=(f\ast \dot{I})(n)
\end{equation*}
A function $f$ is said to be the Dirichlet's inverse of $g$ if $f\ast g=\dot{e}=g\ast f.$\\
The following result is the immediate consequence of the Lemma \ref{resultmu}.
\begin{corollary}
The M\"{o}bius function $\leftidx{_G}{\mu}$ is Dirichlet's inverse of $\dot{I}$ in the sense of geometric calculus.
\end{corollary}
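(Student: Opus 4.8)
The plan is to compute the Dirichlet product $\leftidx{_G}{\mu}\ast\dot{I}$ directly from its definition and then invoke Lemma \ref{resultmu}. First I would write, for each $n\in\mathbb{N}$,
\[
(\leftidx{_G}{\mu}\ast\dot{I})(n)=\sideset{_G}{}\sum_{k|n}\leftidx{_G}{\mu}\!\left(\frac{n}{k}\right)\odot\dot{I}(k).
\]
Since $\dot{I}(k)=e$ for every $k$ and $e$ is the geometric unity, so that $x\odot e=x^{\ln e}=x$ for all $x\in\mathbb{R}(G)$, each summand collapses to $\leftidx{_G}{\mu}(n/k)$, whence the whole expression equals $\sideset{_G}{}\sum_{k|n}\leftidx{_G}{\mu}(n/k)$.

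Next I would observe that the map $k\mapsto n/k$ is a bijection of the set of divisors of $n$ onto itself, so reindexing the geometric sum yields $\sideset{_G}{}\sum_{k|n}\leftidx{_G}{\mu}(n/k)=\sideset{_G}{}\sum_{d|n}\leftidx{_G}{\mu}(d)$. By Lemma \ref{resultmu}, this last sum equals $e$ when $n=1$ and $e^0$ when $n>1$, which is precisely $\dot{e}(n)$. Hence $\leftidx{_G}{\mu}\ast\dot{I}=\dot{e}$. Finally, since the Dirichlet product $(\ast)$ is commutative, we also get $\dot{I}\ast\leftidx{_G}{\mu}=\leftidx{_G}{\mu}\ast\dot{I}=\dot{e}$, so $\leftidx{_G}{\mu}\ast\dot{I}=\dot{e}=\dot{I}\ast\leftidx{_G}{\mu}$, which is exactly the assertion that $\leftidx{_G}{\mu}$ is the Dirichlet inverse of $\dot{I}$.

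I do not anticipate any genuine obstacle in this argument; it is a short consequence of the lemma. The only two points that call for a moment's care are the reduction of $x\odot e$ to $x$, which rests on $e$ being the multiplicative unit of geometric arithmetic, and the symmetry of the divisor sum under $k\mapsto n/k$, which lets one pass from $\sideset{_G}{}\sum_{k|n}\leftidx{_G}{\mu}(n/k)$ to $\sideset{_G}{}\sum_{d|n}\leftidx{_G}{\mu}(d)$ so that Lemma \ref{resultmu} applies verbatim.
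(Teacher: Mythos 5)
Your proposal is correct and matches the paper's intent exactly: the paper simply declares the corollary an immediate consequence of Lemma \ref{resultmu}, and your computation (unwinding the Dirichlet product, using $x\odot e=x$, reindexing the divisor sum, and applying the lemma) is precisely the argument being left implicit. No issues.
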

\begin{thm}[Analogous of M\"{o}bius Inversion theorem]\label{mit}
Let $f,g:\mathbb{N}\rightarrow \mathbb{R}(G).$ If $g(n)=\sideset{_G}{}\sum_{k|n}f(k)$ then
\begin{equation*}
f(n)=\sideset{_G}{}\sum_{k|n}\leftidx{_G}{\mu}\left( \frac{n}{k}\right) \odot g(k).
\end{equation*} 
\end{thm}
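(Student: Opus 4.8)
\textit{Proof proposal.} The plan is to read the hypothesis as an identity in the Dirichlet convolution ``ring'' over $\mathbb{R}(G)$ and to invert it using the Corollary just stated. First I would observe that the assumption $g(n)=\sideset{_G}{}\sum_{k|n}f(k)$ is exactly the statement $g=\dot{I}\ast f$, since it was recorded immediately after the definition of the Dirichlet product that $(\dot{I}\ast f)(n)=\sideset{_G}{}\sum_{k|n}f(k)$.

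Next, I would convolve both sides with the M\"{o}bius function $\leftidx{_G}{\mu}$ and use the associativity of $\ast$ together with the Corollary ($\leftidx{_G}{\mu}$ is the Dirichlet inverse of $\dot{I}$, i.e. $\leftidx{_G}{\mu}\ast\dot{I}=\dot{e}$) to obtain
\[
\leftidx{_G}{\mu}\ast g=\leftidx{_G}{\mu}\ast(\dot{I}\ast f)=(\leftidx{_G}{\mu}\ast\dot{I})\ast f=\dot{e}\ast f=f,
\]
the last step being the defining property of the Dirichlet identity $\dot{e}$. It then remains only to expand the left-hand side: by definition of the Dirichlet product, $(\leftidx{_G}{\mu}\ast g)(n)=\sideset{_G}{}\sum_{k|n}\leftidx{_G}{\mu}\left(\frac{n}{k}\right)\odot g(k)$, which is precisely the asserted formula for $f(n)$.

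The only points needing attention are that the algebraic facts invoked genuinely transport to the geometric field: commutativity and associativity of $\ast$ (already asserted in the paper), the two-sided identity property of $\dot{e}$, and the Corollary itself, which rests on Lemma \ref{resultmu}. None of these is a real obstacle, because $(\mathbb{R}(G),\oplus,\odot)$ is a field, so $\odot$ distributes over $\oplus$ and all the bookkeeping with geometric sums is formally identical to the classical case. If one prefers to avoid the convolution formalism, the same result follows directly by expanding $\sideset{_G}{}\sum_{k|n}\leftidx{_G}{\mu}(n/k)\odot\left(\sideset{_G}{}\sum_{d|k}f(d)\right)$, distributing $\odot$ over $\oplus$ into a double geometric sum, interchanging the order of summation (writing $k=d\ell$ with $\ell\mid n/d$ as $d$ runs over the divisors of $n$), and recognizing the inner sum $\sideset{_G}{}\sum_{\ell\mid n/d}\leftidx{_G}{\mu}(n/(d\ell))=\sideset{_G}{}\sum_{j\mid n/d}\leftidx{_G}{\mu}(j)=\dot{e}(n/d)$ via Lemma \ref{resultmu}; this annihilates every term except $d=n$ and leaves $f(n)\odot e=f(n)$. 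I would, however, keep the shorter convolution argument as the main proof.
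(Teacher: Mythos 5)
Your proposal is correct and follows essentially the same route as the paper: both rewrite the hypothesis as $g=\dot{I}\ast f$, convolve with $\leftidx{_G}{\mu}$, and use associativity together with $\leftidx{_G}{\mu}\ast\dot{I}=\dot{e}$ to conclude $f=\leftidx{_G}{\mu}\ast g$. The additional direct double-sum verification you sketch is a sound (and more self-contained) alternative, but it is not needed.
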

\begin{proof}
It is equivalent to show that $f=\leftidx{_G}{\mu}\ast g,$\\
Now,
\begin{equation*}
\leftidx{_G}{\mu}\ast g= \leftidx{_G}{\mu}\ast (\dot{I}\ast f)= (\leftidx{_G}{\mu}\ast \dot{I})\ast f = \dot{e}\ast f=f.
\end{equation*}
\end{proof}
Consider the matrix $W=[w_{i,j}]$ for which $w_{i,j}=e$ if $i$ is a divisor of j and $w_{i,j}=e^0$ otherwise. Thus the statement $g(n)=\sideset{_G}{}\sum_{k|n}f(k)$ is equivalent to $g=Wf$ where the juxtaposition denotes the matrix multiplication in the sense of geometric calculus. Using Theorem \ref{mit}, if $g=Wf$ then $f=Mg$ where $M$ is the matrix with element $\leftidx{_G}{\mu}\left(\frac{j}{i} \right)$ in the row $i$ and column $j$ if $i|j$ and $e^0$ otherwise. Thus\\
\[
W=
\begin{bmatrix}
e & e & e & e & e & e & e & e &\ldots\\
e^0 & e & e^0 & e & e^0 & e & e^0 & e &\ldots\\
e^0 & e^0 & e & e^0 & e^0 & e & e^0 & e^0 &\dots\\
e^0 & e^0 & e^0 & e & e^0 & e^0 & e^0 & e &\dots\\
\vdots & \vdots & \vdots & \vdots & \vdots & \vdots & \vdots & \vdots
\end{bmatrix}
\]
It is interesting to see that the rows of the matrix $W$ are sequences in $AC(G).$
\begin{thm}
If the sequence $f\in AS(G)$ then the sequence $g$ defined by $g(m)=\sideset{_G}{}\sum_{k|m}f(k)$ is in $AC(G).$
\end{thm}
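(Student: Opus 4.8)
The statement is essentially a matter of unwinding the two definitions, so the plan is short. Suppose $f\in AS(G)$. By the definition of arithmetic summability in the sense of geometric calculus there is an integer $n$ such that
\[
\leftidx{_G}{\left|\sideset{_G}{}\sum_{k|m}f(k)\ominus \sideset{_G}{}\sum_{k|\left\langle m,n\right\rangle}f(k)\right|}=1
\qquad\text{for all }m\in\mathbb{N}.
\]
First I would record the only (trivial) observation the argument needs: the sequence under consideration is $g(m)=\sideset{_G}{}\sum_{k|m}f(k)$, and applying this same recipe to the integer $\left\langle m,n\right\rangle$ gives $g(\left\langle m,n\right\rangle)=\sideset{_G}{}\sum_{k|\left\langle m,n\right\rangle}f(k)$. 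That is, forming the partial divisor-sum $g$ and then restricting its argument to $\left\langle m,n\right\rangle$ produces exactly the same quantity as restricting the divisor sum to $k\mid\left\langle m,n\right\rangle$.

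Next I would substitute these two identities into the displayed equality above. This turns it into
\[
\leftidx{_G}{\left|g(m)\ominus g(\left\langle m,n\right\rangle)\right|}=1
\qquad\text{for all }m\in\mathbb{N},
\]
with the very same integer $n$. But this is precisely the defining condition for $g\in AC(G)$, so the proof concludes immediately.

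There is no real obstacle here: the content is entirely in the observation that the witnessing integer $n$ for the arithmetic summability of $f$ serves simultaneously as the witnessing integer for the arithmetic convergence of $g$. In contrast to the linearity theorem, no triangle inequality for $\leftidx{_G}{\left|\cdot\right|}$ nor any further structural property of the geometric operations $\oplus,\ominus,\odot$ is needed; one only uses that these operations are well defined on $\mathbb{R}(G)$.
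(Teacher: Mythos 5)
Your proposal is correct and follows exactly the paper's own argument: take the witnessing integer $n$ from the definition of $f\in AS(G)$, observe that the two divisor sums in that definition are precisely $g(m)$ and $g(\left\langle m,n\right\rangle)$, and conclude that the same $n$ witnesses $g\in AC(G)$. Nothing further is needed.
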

\begin{proof}
Let $f\in AS(G).$ Then by definition, for an integer $n$\\
\begin{align*}
\leftidx{_G}{\left|\sideset{_G}{}\sum_{k|m}f(k)\ominus \sideset{_G}{}\sum_{k|{\left\langle m,n\right\rangle}}f(k)\right|}=1\\
\Rightarrow \leftidx{_G}{\left| g(m)\ominus g(\left\langle m,n \right\rangle)\right| }=1\\
\end{align*}
Thus $g$ is a sequence in $AC(G).$
\end{proof}
\begin{corollary}
If $f$ is a sequence in $AS(G)$ then $Wf$ is a sequence in $AC(G)$ and if $g$ is a sequence in $AC(G)$ then $Mg$ is a sequence in $AS(G).$
\end{corollary}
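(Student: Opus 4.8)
The plan is to read both assertions directly off the theorem just proved together with the matrix dictionary $g=Wf \iff f=Mg$ furnished by the geometric Möbius inversion theorem (Theorem \ref{mit}); no new computation is really required.

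For the first assertion, observe that by the definition of $W$ the sequence $Wf$ is precisely the sequence $g$ with $g(m)=\sideset{_G}{}\sum_{k|m}f(k)$. Hence, if $f\in AS(G)$, the preceding theorem applies verbatim and yields $Wf=g\in AC(G)$.

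For the second assertion, suppose $g\in AC(G)$ and put $f=Mg$. By Theorem \ref{mit} — equivalently, since $\leftidx{_G}{\mu}$ is the Dirichlet inverse of $\dot I$, so $M$ and $W$ are mutually inverse in matrix form — we have $Wf=g$, i.e. $\sideset{_G}{}\sum_{k|m}f(k)=g(m)$ for every $m$. Pick the integer $n$ witnessing the arithmetic convergence of $g$, so that $\leftidx{_G}{\left|g(m)\ominus g(\left\langle m,n\right\rangle)\right|}=1$ for all $m$. Then for this same $n$ and every $m\in\mathbb{N}$,
\[
\leftidx{_G}{\left|\sideset{_G}{}\sum_{k|m}f(k)\ominus \sideset{_G}{}\sum_{k|\left\langle m,n\right\rangle}f(k)\right|}=\leftidx{_G}{\left|g(m)\ominus g(\left\langle m,n\right\rangle)\right|}=1,
\]
which is exactly the statement $f=Mg\in AS(G)$.

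The only point deserving a word of care is the identity $W(Mg)=g$: this is the matrix reformulation of Theorem \ref{mit}, and to pass between $f=\leftidx{_G}{\mu}\ast g$ and $f=Mg$ (and between $g=\dot I\ast f$ and $g=Wf$) one invokes the associativity and commutativity of the geometric Dirichlet product already recorded after its definition. Once this dictionary is acknowledged, the corollary is a direct transcription of the previous theorem and presents no genuine obstacle.
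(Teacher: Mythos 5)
Your proposal is correct and follows exactly the route the paper intends: the first half is the preceding theorem restated in matrix form, and the second half reduces to it once one knows $W(Mg)=g$, which you rightly justify via $\dot I\ast(\leftidx{_G}{\mu}\ast g)=(\dot I\ast\leftidx{_G}{\mu})\ast g=\dot e\ast g=g$. The paper offers no proof at all for this corollary, so your explicit check of the inversion identity $W(Mg)=g$ is, if anything, more careful than the source.
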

Now we focus on some basic properties of the sequence space $AS(G).$
\begin{thm}
If $f$ is any sequence in $\mathbb{R}(G)$ and $n$ any integer then the sequence $\mathcal{R}_nf=g$ defined by $g(i)=f(i)$ when $i|n$ and $e^0$ when $i\nmid n,$ is in $\leftidx{_G}{\varphi}.$ Thus the sequence $\mathcal{R}_n\in AS(G).$
\end{thm}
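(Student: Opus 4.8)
The plan is to avoid invoking the defining condition of $AS(G)$ directly and instead to show that $g=\mathcal{R}_n f$ already belongs to the smaller space $\leftidx{_G}{\varphi}$, after which the conclusion $g\in AS(G)$ follows at once from the inclusion $\leftidx{_G}{\varphi}\subset AS(G)$ recorded earlier in the excerpt. To this end, fix the integer $n$. The crucial — and essentially only — observation is the elementary number-theoretic fact that every positive integer $i$ dividing $n$ satisfies $i\le n$. Hence whenever $i>n$ we have $i\nmid n$, and so, straight from the definition $g(i)=f(i)$ for $i\mid n$ and $g(i)=e^{0}$ for $i\nmid n$, we get $g(i)=e^{0}$ for every $i>n$.

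It then remains to package this as membership in $\leftidx{_G}{\varphi}$: taking $\leftidx{_G}{N}=e^{n}\in\mathbb{Z}(G)$ (equivalently, the threshold index $n$), the previous step says exactly that $g(k)=e^{0}$ for all $k>\leftidx{_G}{N}$, which is the defining property of $\leftidx{_G}{\varphi}$. Therefore $g\in\leftidx{_G}{\varphi}\subset AS(G)$, which proves the statement. There is no genuine obstacle here; the only points requiring a little care are keeping the geometric zero $e^{0}=1$ distinct from the ordinary $0$, and noting that the index set $\{k:k\mid n\}$ on which $g$ is permitted to be nonzero is finite precisely because its elements are bounded by $n$ — which is what supplies the required finite threshold $\leftidx{_G}{N}$.
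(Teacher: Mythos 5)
Your argument is correct: the paper itself omits the proof as routine, and the divisor-bound observation ($i\mid n$ forces $i\le n$, so $g(i)=e^0$ for all $i>n$) together with the stated inclusion $\leftidx{_G}{\varphi}\subset AS(G)$ is exactly the intended reasoning. The only cosmetic point is that the threshold in the definition of $\leftidx{_G}{\varphi}$ is an index, so it is cleaner to take the threshold to be $n$ itself rather than $e^{n}$, as you already note parenthetically.
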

\begin{proof}
The proof is a routine and hence omitted.
\end{proof}
\begin{example}
We consider a sequence $f=\left\lbrace e, e^2, e^3, e^4,\ldots \right\rbrace$ in $\mathbb{R(G)}.$\\
Then, by above definition;\\
\begin{align*}
& R_1=\left\lbrace e, e^0,e^0,e^0,\ldots \right\rbrace \\
& R_2=\left\lbrace e,e^2, e^0, e^0,e^0,\ldots\right\rbrace \\
& R_3= \left\lbrace  e, e^0, e^3, e^0,e^0,e^0,\ldots\right\rbrace \\
\vdots\\
& \text{and so on.}
\end{align*}
which are sequences in $\leftidx{_G}{\varphi}$ and thus sequence in $AS(G).$
\end{example}
\begin{thm}
If $f\in AS(G)$ and $n_k$ is any sequence of integers such that $n_k|n_{k+1}$ then the sequence $g(k)=\sideset{_G}{}\sum_{d|n_k}f(d)$ converges in $\mathbb{R}(G).$
\end{thm}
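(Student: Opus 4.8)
The plan is to reuse, almost verbatim, the argument that produced convergence of $k\mapsto f(n_k)$ from $f\in AC(G)$, but now applied to the divisor-sum sequence instead of $f$ itself. Put $F(m)=\sideset{_G}{}\sum_{d|m}f(d)$, so that the sequence in question is $g(k)=F(n_k)$, and note that $F$ depends on $m$ only through the set of divisors of $m$. Since $f\in AS(G)$, fix an integer $n$ for which $\leftidx{_G}{\left|F(m)\ominus F(\langle m,n\rangle)\right|}=e^0$ for every $m\in\mathbb{N}$; in particular this holds with $m=n_k$ for each $k$.

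First I would record that the numbers $\langle n,n_k\rangle$ eventually stabilize. From $n_k\mid n_{k+1}$ it follows that $\langle n,n_k\rangle\mid\langle n,n_{k+1}\rangle$, while each $\langle n,n_k\rangle$ is a divisor of the fixed integer $n$; since $n$ has only finitely many divisors, a sequence of its divisors that is increasing under divisibility must become constant. Hence there is $K$ with $\langle n,n_k\rangle=\langle n,n_{k+1}\rangle$ for all $k\ge K$, and consequently $F(\langle n,n_i\rangle)=F(\langle n,n_j\rangle)$ whenever $i,j\ge K$.

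Then, for $i,j\ge K$, I would insert the (equal) terms $\ominus F(\langle n,n_i\rangle)\oplus F(\langle n,n_j\rangle)$ and apply the geometric triangle inequality to obtain
\[
\leftidx{_G}{\left|g(i)\ominus g(j)\right|}\le\leftidx{_G}{\left|F(n_i)\ominus F(\langle n,n_i\rangle)\right|}\oplus\leftidx{_G}{\left|F(n_j)\ominus F(\langle n,n_j\rangle)\right|}=e^0\oplus e^0=e^0,
\]
the two terms on the right vanishing by the defining identity of $AS(G)$ with the chosen $n$ (apply it with $m=n_i$ and with $m=n_j$). Thus $(g(k))$ satisfies the geometric Cauchy criterion, and since $\mathbb{R}(G)$ is geometric complete, $g$ converges in $\mathbb{R}(G)$. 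I do not anticipate a real obstacle here; the one point deserving care is the stabilization step, where it is essential that $F$ is a function of the divisor structure alone, so that equality of $\langle n,n_i\rangle$ and $\langle n,n_j\rangle$ genuinely forces $F(\langle n,n_i\rangle)=F(\langle n,n_j\rangle)$.
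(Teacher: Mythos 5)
Your argument is correct and is essentially the paper's own proof: fix the integer $n$ witnessing $f\in AS(G)$, use the divisibility chain $n_k\mid n_{k+1}$ to stabilize $\langle n,n_k\rangle$ beyond some $K$, insert the common middle term, and apply the geometric triangle inequality to get the Cauchy condition. The only difference is that you explicitly justify the stabilization step (divisors of $n$ increasing under divisibility must become constant), which the paper simply asserts by choosing $K$.
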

\begin{proof}
Consider an integer $m$ such that for all $i\in \mathbb{N},$ 
\[\leftidx{_G}{\left| \sideset{_G}{}\sum_{d|i}f(d)\ominus \sideset{_G}{}\sum_{d|\left\langle i,m\right\rangle}f(d)\right|}=1\]
Choose $K$ such that $\left\langle m,n_k\right\rangle=\left\langle m,n_{k+1}\right\rangle  $ for all $k\geq K.$\\
Thus for all $i,j\geq K,$ we have,
\begin{eqnarray*}
\leftidx{_G}{\left| g(i)\ominus g(j)\right|} &=& \leftidx{_G}{\left| \sideset{_G}{}\sum_{d|n_i}f(d)\ominus \sideset{_G}{}\sum_{d|n_j}f(d)\right|}\\
&=& \leftidx{_G}{\left| \sideset{_G}{}\sum_{d|n_i}f(d)\ominus \sideset{_G}{}\sum_{d|\left\langle n_i,m\right\rangle}f(d)\oplus \sideset{_G}{}\sum_{d|\left\langle n_i,m\right\rangle}f(d)\ominus \sideset{_G}{}\sum_{d|n_j}f(d)\right|}\\
&\leq & \leftidx{_G}{\left| \sideset{_G}{}\sum_{d|n_i}f(d)\ominus \sideset{_G}{}\sum_{d|\left\langle n_i,m\right\rangle}f(d)\right|}\oplus \leftidx{_G}{\left| \sideset{_G}{}\sum_{d|\left\langle n_i,m\right\rangle}f(d)\ominus \sideset{_G}{}\sum_{d|n_j}f(d)\right|}\\
&=& e^0\oplus e^0=e^0.
\end{eqnarray*}
Thus $g$ satisfies the Cauchy's criterion and so converges in $\mathbb{R}(G).$
\end{proof}
\section{Conclusion}
In this article, we have investigated the characterizations of the sequence spaces $AC(G)$ and $AS(G)$ demonstrated with suitable examples in the sense of geometric calculus corresponding to the results obtained by Ruckle \cite{RuckleArithmeticalSummability} in the classical calculus. Analogous of the M\"{o}bius matrix and M\"{o}bius inversion formulae in the sense of geometric calculus are also given. We expect that our given notions, definitions, results and the realted investigations might be useful for others in modelling various problems in the field of economics, engineering, numerical analysis, mathematical physics, fuzzy theory, etc. As a future reference, one may obtain the duals of the spaces $AC(G)$ and $AS(G)$ and their relationship with the classical sequence spaces $\ell_{1}(G), \ell_{\infty}(G),$ etc. Further one may obtain similar results by employing other types of calculus.   

\end{document}